\let\uml\"
\newcommand{\Gal}{\operatorname{Gal}}
\newcommand{\Fq}{\mathbf{F}_{q}}
\newcommand{\Aut}{\operatorname{Aut}}
\newcommand{\Zl}{\mathbf{Z}_{\ell}}
\newcommand{\Z}{\mathbf{Z}}
\newcommand{\F}{\mathbf{F}}
\newcommand{\Q}{\mathbf{Q}}
\newcommand{\GL}{\operatorname{GL}}
\newcommand{\disc}{\operatorname{disc}}
\newcommand{\tors}{\mathsf{tors}}
\newcommand{\Prop}{\operatorname{Prop}}
\newcommand{\legen}[2]{\ensuremath{\left( \frac{#1}{#2} \right) }}
\numberwithin{equation}{section}
\theoremstyle{plain}
\newtheorem{thm}[equation]{Theorem}
\newtheorem{lem}[equation]{Lemma}
\newtheorem{defn}[equation]{Definition}
\newtheorem{prop}[equation]{Proposition}
\newtheorem{qu}{Question}
\theoremstyle{remark}
\newtheorem{rmk}[equation]{Remark}
\newtheorem{exm}[equation]{Example}
\begin{document}

\title[Isomorphic Group Structures]{The probability of isomorphic group structures of isogenous elliptic curves over finite fields}

\author{John Cullinan}
\address{Department of Mathematics, Bard College, Annandale-On-Hudson, NY 12504, USA}
\email{cullinan@bard.edu}
\urladdr{\url{http://faculty.bard.edu/cullinan/}}

\author{Nathan Kaplan}
\address{Department of Mathematics, University of California, Irvine, CA 92697, USA}
\email{nckaplan@math.uci.edu }
\urladdr{\url{https://www.math.uci.edu/~nckaplan/}}

\begin{abstract}
Let $\ell$ be a prime number and let $E$ and $E'$ be $\ell$-isogenous elliptic curves defined over $\Q$.  In this paper we determine the proportion of primes $p$ for which $E(\F_p) \simeq E'(\F_p)$.  Our techniques are based on those developed in \cite{ck} and \cite{rnt}.
\end{abstract}

\maketitle

\section{Introduction} \label{intro}

\subsection{Preliminaries} Let $E$ and $E'$ be isogenous elliptic curves defined over a finite field $k$.  In a series of papers \cite{cullinan1, cullinan2, ck, rnt} we considered variants of the following question.

\begin{qu}
If $E(k) \simeq E'(k)$, is it true that $E(K) \simeq E'(K)$ as $K$ varies over finite extensions of $k$? 
\end{qu}

The answer is no, and it is not hard to come up with explicit counterexamples.  However, in  \cite{cullinan1} the first-named author showed that if $\ell$ is an odd prime number and $E$ and $E'$ are $\ell$-isogenous, where the kernel is generated by a $k$-rational point of order $\ell$, then $E(k) \simeq E'(k)$ implies that $E(K) \simeq E'(K)$ for all finite extensions $K$ of $k$.   If $\ell=2$ then it was shown in \cite{cullinan2} if $E(k) \simeq E'(k)$ \emph{and} $E(\mathbf{k}) \simeq E'(\mathbf{k})$, where $\mathbf{k}$ is the unique quadratic extension of $k$, then $E(K) \simeq E'(K)$ for all finite extensions $K$ of $k$.

Building on this, in \cite{ck} and \cite{rnt} we fixed 2-isogenous elliptic curves $E$ and $E'$ over $\Q$ and defined a prime $p$ to be \emph{anomalous} for the pair $(E,E')$ if $E(\F_p) \simeq E'(\F_p)$, but $E(\F_{p^2}) \not \simeq E'(\F_{p^2})$.  The main result of these papers is that the set of anomalous primes has a well-defined density that can be realized by summing a certain geometric series associated to the 2-adic representations of $E$ and $E'$.  For example, if the 2-adic representations of $E$ and $E'$ are as large as possible (since they are $2$-isogenous, each has a rational $2$-torsion point, so the images have index $3$ in $\GL_{2}(\Z_2)$), then the anomalous primes for $(E,E')$ have density 1/30. 

In the course of determining the density of anomalous primes for all pairs $(E,E')$ of 2-isogenous elliptic curves, there is a simpler question that we did not answer:

\begin{qu}
Let $\ell$ be a prime number.  If $E$ and $E'$ are $\ell$-isogenous elliptic curves, what is the proportion $P(E,E')$ of primes $p$ for which $E(\F_p) \simeq E'(\F_p)$?
\end{qu}

More precisely, we consider the ratio
\[
P(E,E')[X] = \frac{\lbrace p \leq X~:~ E(\F_p) \simeq E'(\F_p) \rbrace}{\pi(X)},
\]
where $\pi$ is the prime-counting function, and its limit $\lim_{X \to \infty} P(E,E')[X]$.  When this limit exists, we denote it by $P(E,E')$.   It is a consequence of our prior and current work that the limit always exists and can be computed using the Chebotarev Density Theorem. 

\subsection{Main Theorem} Let $E$ be an elliptic curve over $\Q$.  Fix an algebraic closure $\overline{\Q}$ of $\Q$ and write $\Gal_\Q$ for the absolute Galois group.  If $\ell$ is a prime number, then we write $T_\ell E$ for the $\ell$-adic Tate module of $E$ and 
\begin{align*}
&\rho_{E,\ell}: \Gal_\Q \to \Aut (T_\ell E), \text{ and} \\
&\overline{\rho}_{E,\ell^n}: \Gal_\Q \to \Aut (T_\ell E \otimes \Z/\ell^n\Z) 
\end{align*}
for the $\ell$-adic and mod $\ell^n$ representations of $E$, respectively.  If $G \subseteq \GL_2(\Zl)$ is the image of the $\ell$-adic representation, then we write $G(\ell^n) \subseteq \GL_2(\Z/\ell^n\Z)$ for its reduction modulo $\ell^n$. If $E'$ is $\ell$-isogenous to $E$ then we write  $G'$ and $G'(\ell^n)$ for the images of the $\ell$-adic and mod $\ell^n$ representations of $E'$, respectively.  Since $E$ and $E'$ are $\ell$-isogenous over $\Q$, it follows that $G(\ell)$ and $G'(\ell)$ are Borel subgroups of $\GL_2(\Z/\ell\Z)$.  The level  of the $\ell$-adic representation is $\ell^M$ where $M$ is the smallest positive integer for which $G$ is the full preimage of $G(\ell^M)$ in $\GL_2(\Z_\ell)$.

Let $L_{\ell^m} = \Q(E[\ell^m])$ and  $L_{\ell^m}' = \Q(E'[\ell^m])$ be the $\ell^m$-division fields of $E$ and $E'$, respectively.  Then $L_{\ell^m}$ and $L_{\ell^m}'$ are Galois over $\Q$ with Galois groups $G(\ell^m)$ and $G'(\ell^m)$, respectively.   If $p$ is a prime number, let $F$ and $F'$ denote matrix representatives of the Frobenius classes of $E$ and $E'$, respectively, as elements of $\GL_2(\Zl)$.  Then $F$ and $F'$ will define elements in $G(\ell^m)$ and $G'(\ell^m)$ by reduction modulo $\ell^m$. 

\begin{defn}
Let $m \geq 1$. We define $d_{\ell^m} \in [0, 1]$  to be the proportion of prime numbers such
that $F' \not \equiv I \pmod{\ell^m}$ given that $F \equiv I \pmod{\ell^m}$.  Similarly, we define $d'_{\ell^m}$ to be the proportion of prime numbers such
that $F \not \equiv I \pmod{\ell^m}$ given that $F' \equiv I \pmod{\ell^m}$. 
\end{defn}

In Section \ref{main_section} we determine the possible values of $d_{\ell^m}$ and $d'_{\ell^m}$ by interpreting conditional probabilities in terms of the degrees of the composite field extensions $[L_{\ell^m}L'_{\ell^m}:L_{\ell^m}]$ and $[L_{\ell^m}L'_{\ell^m}:L'_{\ell^m}]$.  Depending on the level of the $\ell$-adic images of $E$ and $E'$, once $m$ is sufficiently large it will follow from our work below that $d_{\ell^m} = d'_{\ell^m} = 1-1/\ell$.   In order to calculate $P(E,E')$, it is then a matter of determining $[L_{\ell^m}L'_{\ell^m}:L_{\ell^m}]$, $[L_{\ell^m}L'_{\ell^m}:L'_{\ell^m}]$, $|G(\ell^m)|$, and $|G'(\ell^m)|$ for small $m$ and then summing a geometric series.

If $E$ and $E'$ do not have CM, then $[\GL_2(\Zl):G] = [\GL_2(\Zl):G']$ by \cite[Prop.~2.1.1]{greenberg5}.  Therefore, if the maximum of the levels of their $\ell$-adic representations is $\ell^M$ then $|G(\ell^M)| = |G'(\ell^M)|$.  By the classification of non-CM $\ell$-adic images in \cite{rszb}, once $\ell >5$ the level of the $\ell$-adic representations of $E$ and $E'$ is $\ell$ (see also \cite{greenberg7}).  If the level is  $\ell$ then, as will follow from our work below, $P(E,E')$ is completely determined  by $|G(\ell)|$ and $|G(\ell')|$ (which must be equal).  If $\ell=5$ then there are a handful of special cases where the level is 25; here one would need to do separate calculations to determine $d_5$ and $d_5'$, but then the calculation of $P(E,E')$ would proceed similarly to the level $\ell$ case.  If $\ell=2$ or 3, then there are many possibilities for the 2- and 3-adic Galois images, as well as for the values of $d_{\ell^m}$ and $d_{\ell^m}'$, for small values of $m$ (we can always take $m \leq 5$).  However, these field extensions are small enough that one could, in principle, through routine computation, determine all possible $P(E,E')$ for all pairs of 2- and 3-isogenous elliptic curves over $\Q$ using the classification of 2-adic images in \cite{rzb} and 3-adic images in \cite{rszb}.   The general result on the values of the $d_{\ell^m}$ and $d_{\ell^m}'$ is as follows.

\begin{prop} \label{dmpropintro}
With notation as above, suppose the elliptic curves $E$ and $E'$ do not have CM and  that the maximum of the levels of their $\ell$-adic representations is $\ell^M$.  Then we have $d_{\ell^m}, d'_{\ell^m} \in \lbrace 0,1 - 1/\ell  \rbrace$ for all $m \geq 1$ and $d_{\ell^m} = d'_{\ell^m} = 1-1/\ell$ for all $m \geq M$.
\end{prop}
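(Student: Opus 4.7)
The plan is to convert the conditional densities into degrees of field extensions via Chebotarev and then compute those degrees using an explicit description of the Galois representations arising from the isogeny $\phi: E \to E'$. By Chebotarev, the proportion of primes with $F \equiv I \pmod{\ell^m}$ equals $1/[L_{\ell^m}:\Q]$, and the proportion with both $F \equiv I$ and $F' \equiv I \pmod{\ell^m}$ equals $1/[L_{\ell^m} L'_{\ell^m}:\Q]$; taking the ratio yields $d_{\ell^m} = 1 - 1/[L_{\ell^m} L'_{\ell^m} : L_{\ell^m}]$ and symmetrically $d'_{\ell^m} = 1 - 1/[L_{\ell^m} L'_{\ell^m} : L'_{\ell^m}]$. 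The proposition thus reduces to showing each of these degrees lies in $\{1,\ell\}$ and equals $\ell$ once $m \geq M$.

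Next I fix compatible bases. Let $P$ generate $\ker \phi$, and choose a $\Zl$-basis $\{e_1, e_2\}$ of $T_\ell E$ with $e_2$ reducing to $P$, together with a $\Zl$-basis $\{f_1, f_2\}$ of $T_\ell E'$ so that the matrix of $\phi$ (in Smith normal form) is $M = \operatorname{diag}(1, \ell)$. Since $P$ is $\Q$-rational,
\[
\rho_{E,\ell}(g) = \begin{pmatrix} \alpha & \ell\beta \\ \gamma & 1+\ell\delta \end{pmatrix}
\]
for each $g \in \Gal_\Q$, and equivariance of $\phi$ gives $\rho_{E',\ell}(g) = M\rho_{E,\ell}(g)M^{-1} = \begin{pmatrix} \alpha & \beta \\ \ell\gamma & 1+\ell\delta \end{pmatrix}$, with $\alpha,\beta,\gamma,\delta \in \Zl$. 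The condition $\rho_{E,\ell^m}(g) = I$ then forces $\alpha \equiv 1$ and $\gamma \equiv 0 \pmod{\ell^m}$, while constraining $\beta$ and $\delta$ only to lie in $\ell^{m-1}\Zl$. Substituting into the formula for $\rho_{E',\ell^m}$ shows that the image of $\ker \rho_{E,\ell^m}$ in $G'(\ell^m)$ consists of matrices $\begin{pmatrix} 1 & t\ell^{m-1} \\ 0 & 1 \end{pmatrix}$ with $t \in \Z/\ell\Z$, which form a cyclic group of order $\ell$. Hence $[L_{\ell^m} L'_{\ell^m}:L_{\ell^m}] \in \{1,\ell\}$, and the symmetric computation (or the same argument applied to $\hat\phi$) yields $[L_{\ell^m} L'_{\ell^m}:L'_{\ell^m}] \in \{1,\ell\}$. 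This proves the first assertion.

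For the second assertion, let $\ell^{M_1}$ and $\ell^{M_2}$ be the levels of $G$ and $G'$ individually, so $M = \max(M_1, M_2)$. When $m \geq M_1$, the principal congruence subgroup $\Gamma(\ell^m)$ is contained in $G$, so for each $t \in \Z/\ell\Z$ the matrix $A_t = \begin{pmatrix} 1 & t\ell^m \\ 0 & 1 \end{pmatrix}$ arises as $\rho_{E,\ell}(g)$ for some $g \in \Gal_\Q$; the corresponding image in $G'(\ell^m)$ is $MA_tM^{-1} = \begin{pmatrix} 1 & t\ell^{m-1} \\ 0 & 1 \end{pmatrix}$, so every value of $t$ is realized and $d_{\ell^m} = 1 - 1/\ell$. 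The parallel argument for $d'_{\ell^m}$ uses matrices $B_s = \begin{pmatrix} 1 & 0 \\ s\ell^m & 1 \end{pmatrix}$ in $G'$, which are available as soon as $m \geq M_2$. Both equalities therefore hold once $m \geq M$. The one delicate point is ensuring that the explicit matrix lifts used in this final step genuinely arise as Galois images; this is precisely what the level hypothesis provides through the containment of the principal congruence subgroups in $G$ and $G'$.
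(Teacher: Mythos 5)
Your argument is correct, and for the crucial second assertion it takes a genuinely different route from the paper. The first half of your proof --- the Bayes/Chebotarev reduction of $d_{\ell^m}$ to $1-[L_{\ell^m}L'_{\ell^m}:L_{\ell^m}]^{-1}$, and the matrix computation showing that the image of $\ker\overline{\rho}_{E,\ell^m}$ in $G'(\ell^m)$ lands in the order-$\ell$ group of matrices $\left(\begin{smallmatrix}1 & t\ell^{m-1}\\ 0& 1\end{smallmatrix}\right)$ --- matches what the paper does in Lemmas \ref{xfield} and \ref{dmx} and the discussion around \eqref{ratio}, where the degree bound is outsourced to \cite[\S 2]{rnt}. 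For the claim that both degrees equal $\ell$ once $m\ge M$, however, the paper's Proposition \ref{geq5} argues geometrically: it uses the level hypothesis to produce an actual prime $p$ whose Frobenius is a prescribed matrix modulo $\ell^{2m+1}$, computes $v_\ell(t^2-4p)=2m$, and invokes Kohel's theorem on isogeny volcanoes to conclude that $E$ sits alone on the crater of a height-$m$ volcano, so the isogeny $E\to E'$ is descending and $E'(\F_p)$ lacks full $\ell^m$-torsion. You instead stay entirely inside the Galois images: since the principal congruence subgroup $\Gamma(\ell^m)$ lies in $G$ for $m\ge M_1$, the unipotent $\left(\begin{smallmatrix}1&\ell^m\\0&1\end{smallmatrix}\right)$ is realized as $\rho_{E,\ell}(g)$ for some $g\in\Gal_\Q$, and conjugating by $\operatorname{diag}(1,\ell)$ shows that $g$ fixes $L_{\ell^m}$ but not $L'_{\ell^m}$; the symmetric statement uses lower-triangular unipotents in $G'$. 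Your version is shorter and avoids volcanoes entirely, needing only the Galois-equivariance of $T_\ell\phi$ and a choice of bases; the paper's version produces an explicit prime witnessing $E(\F_p)\not\simeq E'(\F_p)$, which fits the volcano framework used throughout the rest of the paper.

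One small repair: you justify the shape $\left(\begin{smallmatrix}\alpha & \ell\beta\\ \gamma & 1+\ell\delta\end{smallmatrix}\right)$ by asserting that $P$ is $\Q$-rational, but that is not a hypothesis of the proposition; only the subgroup $\langle P\rangle$ is Galois-stable, so a priori the $(2,2)$-entry is an arbitrary unit. This costs you nothing --- integrality of $M\rho_{E,\ell}(g)M^{-1}$ requires only that $\ell$ divide the $(1,2)$-entry, and wherever you use the $(2,2)$-entry you have already restricted to $g$ with $\overline{\rho}_{E,\ell^m}(g)=I$, which forces it to be $1$ modulo $\ell^m$ --- but the justification should be rephrased accordingly.
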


\begin{thm} \label{main_thm}
Let $E$ and $E'$ be $\ell$-isogenous elliptic curves over $\Q$.  Then the proportion $P(E,E')$ of primes such that $E(\F_p) \simeq E'(\F_p)$ is given by the formula
\begin{align} \label{main_form}
P(E,E') = 1-\sum_{m = 1}^\infty \left( \frac{d_{\ell^m}}{|G(\ell^m)|}  +  \frac{d'_{\ell^m}}{|G'(\ell^m)|}\right).
\end{align}
\end{thm}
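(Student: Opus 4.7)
The plan is to reduce $E(\F_p) \simeq E'(\F_p)$ to an equality $\nu_p = \nu'_p$ of Frobenius invariants, to establish the structural bound $|\nu_p - \nu'_p| \leq 1$ via the $\ell$-isogeny, and then to assemble the density using Chebotarev at each level $m$.

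\textbf{Reduction to a Frobenius condition.} For a prime $p$ of good reduction, isogenous elliptic curves over $\F_p$ have the same point count, and the $\ell$-isogeny $\phi\colon E \to E'$ induces isomorphisms on the $q$-primary parts of $E(\F_p)$ and $E'(\F_p)$ for every prime $q \neq \ell$. Hence $E(\F_p) \simeq E'(\F_p)$ iff $E(\F_p)[\ell^\infty] \simeq E'(\F_p)[\ell^\infty]$. Writing $E(\F_p)[\ell^\infty] \simeq \Z/\ell^{\nu_p} \times \Z/\ell^{b_p}$ with $\nu_p \leq b_p$, the smaller invariant $\nu_p$ coincides with $\max\{m \geq 0 : F \equiv I \pmod{\ell^m}\}$, where $F$ is the matrix of Frobenius on $T_\ell E$. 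Combined with $\nu_p + b_p = \nu'_p + b'_p$, this reduces the isomorphism to the condition $\nu_p = \nu'_p$.

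\textbf{The key inequality.} The decisive structural input is $|\nu_p - \nu'_p| \leq 1$ for every prime of good reduction. The induced map $\phi_*\colon T_\ell E \hookrightarrow T_\ell E'$ has image of index $\ell$, so in appropriate $\Z_\ell$-bases it is represented by $\operatorname{diag}(1, \ell)$; since Frobenius commutes with $\phi_*$, the matrix $M$ of Frobenius on $T_\ell E$ is the conjugate $\operatorname{diag}(1, \ell)^{-1} M' \operatorname{diag}(1, \ell)$ of its counterpart $M'$ on $T_\ell E'$. The entries of $M - I$ therefore agree with those of $M' - I$ on the diagonal and differ by a factor of $\ell^{\pm 1}$ off the diagonal. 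Since $\nu_p$ equals the minimum $\ell$-adic valuation of the entries of $M - I$, and similarly $\nu'_p$ for $M' - I$, these minima differ by at most $1$.

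\textbf{Assembly via Chebotarev.} The bound $|\nu_p - \nu'_p| \leq 1$ implies that $\{\nu_p \geq m,\ \nu'_p < m\} = \{\nu_p = m,\ \nu'_p = m-1\}$, so these events are pairwise disjoint as $m$ varies, with union $\{\nu_p > \nu'_p\}$. Conditioning on $\{F \equiv I \pmod{\ell^m}\}$, whose density is $1/|G(\ell^m)|$ by Chebotarev applied to $L_{\ell^m}/\Q$, gives
\[
\Prob\bigl(F \equiv I,\ F' \not\equiv I \pmod{\ell^m}\bigr) = \frac{d_{\ell^m}}{|G(\ell^m)|},
\]
and the symmetric identity holds with the roles of $E$ and $E'$ swapped. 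Summing over $m \geq 1$ yields
\[
\Prob(\nu_p \neq \nu'_p) = \sum_{m=1}^\infty \left(\frac{d_{\ell^m}}{|G(\ell^m)|} + \frac{d'_{\ell^m}}{|G'(\ell^m)|}\right),
\]
and the formula follows on subtracting from $1$. Convergence is ensured by Proposition~\ref{dmpropintro}: for $m \geq M$ the numerators stabilize at $1 - 1/\ell$ while the denominators grow by a factor of $\ell^4$ per level. The main obstacle is the bound $|\nu_p - \nu'_p| \leq 1$; without it the events in the series would overlap for different $m$, and the sum would overcount $\Prob(\nu_p \neq \nu'_p)$ rather than compute it exactly.
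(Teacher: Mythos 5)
Your proof is correct and follows essentially the same route as the paper: reduce to the $\ell$-primary parts, observe that the two group structures differ by at most one "step," and sum the disjoint events $\{F \equiv I,\ F' \not\equiv I \pmod{\ell^m}\}$ via Chebotarev. The only difference is that you justify the key bound $|\nu_p - \nu_p'| \leq 1$ by an explicit $\operatorname{diag}(1,\ell)$-conjugation on Tate modules, whereas the paper invokes the theory of isogeny volcanoes; your version makes the disjointness of the events (which the paper's two-sentence proof leaves implicit) fully explicit.
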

 
\begin{rmk} \label{cmrmk}
Theorem \ref{main_thm} holds for all pairs of $\ell$-isogenous elliptic curves over $\Q$, including those with CM.  However, there are some significant differences in the CM case that make the evaluation of $P(E,E')$ much more straightforward.  We will show in Section \ref{CM} below that $P(E,E') =1$ once $\ell >3$.  If $\ell = 2$ or 3, then we can have $P(E,E') \ne 1$ but, as we will also show, there are only a handful of possibilities for $P(E,E')$.  This complements \cite[Thm.~1.11]{rnt} where we proved that if $E$ and $E'$ are 2-isogenous CM elliptic curves over $\Q$, then the proportion of anomalous primes is either $1/12$ or 0.
\end{rmk}

Because the techniques used in this paper are very similar to those of \cite{ck} and \cite{rnt}, we aim for brevity by referring the reader, whenever possible, to those background results rather than rederiving them here.  Throughout the paper we use the LMFDB \cite{lmfdb} for specific examples and Pari/GP \cite{PARI} for computation.

\section{Main Result} \label{main_section}

If $E$ and $E'$ are $\ell$-isogenous elliptic curves defined over a finite field $\F_p$, then the prime-to-$\ell$ parts of the groups $E(\F_p)$ and $E'(\F_p)$ are isomorphic \cite[Cor.~1]{cullinan1}.   Suppose the $\ell$-Sylow subgroup of $E(\F_p)$ has size $\ell^v$.  If $E(\F_p)$ and $E'(\F_p)$ are nonisomorphic, then by the theory of isogeny volcanoes (see \cite{sutherland} for background)  there exists a positive integer $m \leq v/2$ such that
\begin{align*}
E(\F_p)[\ell^\infty] &= \Z/\ell^{m}\Z \times \Z/\ell^{v-m} \Z, \text{ and} \\
E'(\F_p)[\ell^\infty] &= \Z/\ell^{m-1}\Z \times \Z/\ell^{v-m+1} \Z,
\end{align*}
or
\begin{align*}
E(\F_p)[\ell^\infty] &= \Z/\ell^{m}\Z \times \Z/\ell^{v-m} \Z, \text{ and} \\
E'(\F_p)[\ell^\infty] &= \Z/\ell^{m+1}\Z \times \Z/\ell^{v-m-1} \Z.
\end{align*}
In the first case, we have $F \equiv I \pmod{\ell^m}$ and $F' \not \equiv I \pmod{\ell^m}$, while in the second case we have $F'\equiv I \pmod{\ell^{m+1}}$ and $F' \not \equiv I \pmod{\ell^{m+1}}$.   
For the rest of the paper we fix a positive integer $m \geq 1$.   We now identify two proportions of prime numbers, stated in terms of the Frobenius classes to which they belong.

\begin{defn}
Let $X,X' \in [0,1]$ be defined as the following conditional probabilities:
\begin{align*}
X &= \Prop \left( F' \equiv I \pmod{\ell^m} ~|~ F \equiv I \pmod{\ell^m} \right),\text{ and} \\
X' &= \Prop \left( F \equiv I \pmod{\ell^m} ~|~ F' \equiv I \pmod{\ell^m} \right).
\end{align*}
\end{defn}

By the Chebotarev Density Theorem,
\begin{align*}
\Prop \left(F \equiv F' \equiv I \pmod{\ell^m} \right) &=  \frac{1}{[L_{\ell^m}L'_{\ell^m} :\Q]},  \\
\Prop \left(F \equiv I \pmod{\ell^m} \right) &= \frac{1}{[L_{\ell^m}:\Q]} = \frac{1}{|G(\ell^m)|}, \text{ and} \\
\Prop \left(F' \equiv I \pmod{\ell^m} \right) &= \frac{1}{[L'_{\ell^m}:\Q]} = \frac{1}{|G'(\ell^m)|}.
\end{align*}
By basic field theory 
\[
[L_{\ell^m}L'_{\ell^m} :\Q] = [L_{\ell^m}L'_{\ell^m}:L_{\ell^m}]\cdot |G(\ell^m)| = [L_{\ell^m}L'_{\ell^m}:L'_{\ell^m}]\cdot |G'(\ell^m)|.
\]

\begin{lem}\label{xfield}
With all notation as above, we have $X = [L_{\ell^m}L'_{\ell^m}:L_{\ell^m}]^{-1}$ and $X' = [L_{\ell^m}L'_{\ell^m}:L_{\ell^m}']^{-1}$.
\end{lem}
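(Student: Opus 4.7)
The plan is to unwind the definition of conditional probability and combine it with the three Chebotarev density identities displayed immediately before the lemma statement. By definition,
\[
X = \frac{\Prop\!\left(F \equiv I \text{ and } F' \equiv I \pmod{\ell^m}\right)}{\Prop\!\left(F \equiv I \pmod{\ell^m}\right)},
\]
and analogously for $X'$ with the roles of $F$ and $F'$ swapped. Substituting the Chebotarev values computed above gives
\[
X = \frac{1/[L_{\ell^m}L'_{\ell^m}:\Q]}{1/|G(\ell^m)|} = \frac{|G(\ell^m)|}{[L_{\ell^m}L'_{\ell^m}:\Q]}.
\]

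To finish, I would apply the multiplicativity of field degrees recorded in the last displayed equation before the lemma: $[L_{\ell^m}L'_{\ell^m}:\Q] = [L_{\ell^m}L'_{\ell^m}:L_{\ell^m}] \cdot |G(\ell^m)|$. Cancelling $|G(\ell^m)|$ yields $X = [L_{\ell^m}L'_{\ell^m}:L_{\ell^m}]^{-1}$. The identical argument with $E$ and $E'$ interchanged, using instead the factorization $[L_{\ell^m}L'_{\ell^m}:\Q] = [L_{\ell^m}L'_{\ell^m}:L'_{\ell^m}] \cdot |G'(\ell^m)|$, gives $X' = [L_{\ell^m}L'_{\ell^m}:L'_{\ell^m}]^{-1}$.

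The only thing one needs to be careful about is justifying that the conditional probability is literally the ratio of the two Chebotarev densities rather than only being meaningful as a limit of ratios; this is standard for sets of primes defined by Frobenius conjugacy conditions, since both the numerator and denominator correspond to unions of Frobenius classes in the Galois group of the compositum $L_{\ell^m}L'_{\ell^m}/\Q$, and the numerator event is a sub-collection of the denominator event. There is no real obstacle here — the lemma is essentially a direct translation of Chebotarev into the language of conditional probability, and the proof should be only a few lines long.
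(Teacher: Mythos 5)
Your proof is correct and follows the same route as the paper, which simply invokes Bayes' Law (the ratio definition of conditional probability) applied to the three Chebotarev densities and the degree factorization through $L_{\ell^m}$ and $L'_{\ell^m}$. You have merely written out in full the computation that the paper's one-line proof leaves implicit.
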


\begin{proof}
This is Bayes' Law applied to the Chebotarev Density Theorem and the field diagram
\[
\xymatrix{
&L_{\ell^m}L'_{\ell^m} \ar@{-}[dl] \ar@{-}[dr] \\
L_{\ell^m} \ar@{-}_{|G(\ell^m)|}[dr]& & L'_{\ell^m} \ar@{-}^{|G'(\ell^m)|}[dl] \\
&\Q
}
\]
\end{proof}

Since 
\begin{align*}
[L_{\ell^m}L'_{\ell^m} :\Q] &= [L_{\ell^m}L'_{\ell^m}:L_{\ell^m}]\cdot |G(\ell^m)| =  [L_{\ell^m}L'_{\ell^m}:L'_{\ell^m}]\cdot |G'(\ell^m)|,
\end{align*}
it follows that
\begin{align} \label{ratio}
\frac{[L_{\ell^m}L'_{\ell^m}:L'_{\ell^m}]}{[L_{\ell^m}L'_{\ell^m}:L_{\ell^m}]} = \frac{|G(\ell^m)|}{|G'(\ell^m)|} \in \lbrace 1,\ell,1/\ell \rbrace,
\end{align}
with $[L_{\ell^m}L'_{\ell^m}:L_{\ell^m}],  [L_{\ell^m}L'_{\ell^m}:L'_{\ell^m}] \in \lbrace 1,\ell \rbrace$.  For proofs of these statements, see \cite[\S 2]{rnt}.

\begin{lem} \label{dmx}
With all notation as above, we have $d_{\ell^m} = 1-X$ and $d_{\ell^m}' = 1-X'$.
\end{lem}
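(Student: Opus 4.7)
The statement is essentially the complementary--event identity for conditional probabilities, so the proof is very short; the only thing to check is that all the densities involved are genuinely well defined.

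First I would note that, once $m$ is fixed, each of the conditions ``$F \equiv I \pmod{\ell^m}$'' and ``$F' \equiv I \pmod{\ell^m}$'' is equivalent to the Frobenius at $p$ lying in a specified union of conjugacy classes of $\Gal(L_{\ell^m} L'_{\ell^m}/\Q)$ (namely, those that project trivially to $G(\ell^m)$, respectively $G'(\ell^m)$). By the Chebotarev Density Theorem applied to the compositum $L_{\ell^m} L'_{\ell^m}$, the joint density
\[
\Prop\bigl(F \equiv I \pmod{\ell^m} \ \text{and}\ F' \equiv I \pmod{\ell^m}\bigr) = \frac{1}{[L_{\ell^m} L'_{\ell^m} : \Q]}
\]
exists, and similarly for the individual and marginal conditions. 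In particular the conditioning event $\{F \equiv I \pmod{\ell^m}\}$ has positive density $1/|G(\ell^m)|$, so the conditional densities $X$ and $d_{\ell^m}$ are well defined.

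Next, conditional on $F \equiv I \pmod{\ell^m}$, the two events ``$F' \equiv I \pmod{\ell^m}$'' and ``$F' \not\equiv I \pmod{\ell^m}$'' are complementary and partition the set of primes satisfying the conditioning hypothesis. Therefore their conditional densities sum to $1$, giving
\[
d_{\ell^m} = \Prop(F' \not\equiv I \mid F \equiv I) = 1 - \Prop(F' \equiv I \mid F \equiv I) = 1 - X.
\]
The argument for the primed version is identical, interchanging the roles of $E$ and $E'$, and yields $d'_{\ell^m} = 1 - X'$.

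There is no real obstacle here; the only subtlety is the purely formal one of ensuring that the Chebotarev densities underpinning the probability calculus actually exist, which is immediate from the preceding discussion and Lemma~\ref{xfield}. Combined with that lemma, the present identity rewrites $d_{\ell^m}$ and $d'_{\ell^m}$ directly in terms of the field degrees $[L_{\ell^m} L'_{\ell^m} : L_{\ell^m}]$ and $[L_{\ell^m} L'_{\ell^m} : L'_{\ell^m}]$, which is the form that will be used in the proof of Proposition~\ref{dmpropintro} and Theorem~\ref{main_thm}.
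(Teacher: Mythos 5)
Your proof is correct and matches the paper's approach: the paper simply observes that the identity follows directly from the definitions of $d_{\ell^m}$, $d'_{\ell^m}$, $X$, and $X'$, which is exactly your complementary-event argument. Your added remarks on the existence of the underlying Chebotarev densities are a reasonable elaboration but not a different method.
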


\begin{proof}
This follows directly from the definitions of $d_{\ell^m}, d'_{\ell^m}, X$, and $X'$. 
\end{proof}

\begin{prop} \label{geq5}
Suppose $E$ and $E'$ do not have CM and that the maximum of the levels of their $\ell$-adic representations is $\ell^M$.  Then if $m \geq M$, we have 
\[
[L_{\ell^m}L'_{\ell^m}:L_{\ell^m}] = [L_{\ell^m}L'_{\ell^m}:L'_{\ell^m}] = \ell.
\]
\end{prop}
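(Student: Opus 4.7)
The plan is to use the $\ell$-isogeny $\phi\colon E \to E'$ to realize $\rho_{E',\ell}$ as an explicit $\GL_2(\Ql)$-conjugate of $\rho_{E,\ell}$, then compute the compositum degree directly as an index of explicit open subgroups. Equation~(\ref{ratio}) together with the Greenberg equality $[\GL_2(\Zl):G] = [\GL_2(\Zl):G']$ forces $|G(\ell^m)| = |G'(\ell^m)|$ once $m \geq M$: for such $m$ both $G$ and $G'$ are full preimages of their mod-$\ell^M$ reductions, so $|G(\ell^m)|$ equals $[\GL_2(\Zl):\Gamma(\ell^m)]/[\GL_2(\Zl):G]$, and likewise for $G'$. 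Hence the two compositum degrees are equal, and since each lies in $\{1,\ell\}$ by~(\ref{ratio}), it suffices to exhibit one of them as $\ell$.

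For the conjugation setup I would apply Smith normal form to $\phi_*\colon \Tl E \hookrightarrow \Tl E'$ and choose $\Zl$-bases $(e_1, e_2)$ of $\Tl E$ and $(f_1, f_2)$ of $\Tl E'$ with $\phi_*(e_1) = \ell f_1$ and $\phi_*(e_2) = f_2$. Galois-equivariance of $\phi_*$ then gives
\[
\rho_{E',\ell}(\sigma) = P\, \rho_{E,\ell}(\sigma)\, P^{-1}, \qquad P = \mathrm{diag}(\ell,1) \in \GL_2(\Ql),
\]
for every $\sigma \in \Gal_\Q$. Writing $\Gamma(\ell^m) = \{g \in \GL_2(\Zl) : g \equiv I \pmod{\ell^m}\}$, the hypothesis $m \geq M$ gives $G \cap \Gamma(\ell^m) = \Gamma(\ell^m)$, and similarly for $G'$. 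Taking preimages under $\rho_{E,\ell}$ and intersecting then identifies
\[
[L_{\ell^m}L'_{\ell^m}:L_{\ell^m}] = \bigl[\Gamma(\ell^m) : \Gamma(\ell^m) \cap P^{-1}\Gamma(\ell^m)P\bigr].
\]

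The remaining computation is direct. Writing $g = I + \ell^m N$ with $N \in \Mat_2(\Zl)$ gives $PgP^{-1} = I + \ell^m PNP^{-1}$, which lies in $\Gamma(\ell^m)$ if and only if $PNP^{-1}$ has entries in $\Zl$, equivalently $N_{21} \in \ell\Zl$. The assignment $I + \ell^m N \mapsto N_{21} \pmod{\ell}$ is a surjective group homomorphism $\Gamma(\ell^m) \to \Fl$ whose kernel is precisely the above intersection, so the index equals $\ell$. The only real bookkeeping obstacle is verifying that this subset is closed under multiplication and that the reduction is additive: the product $(I + \ell^m N_1)(I + \ell^m N_2) = I + \ell^m(N_1 + N_2 + \ell^m N_1 N_2)$ contributes a cross term $\ell^m(N_1 N_2)_{21} \in \ell^m \Zl \subseteq \ell\Zl$ to the $(2,1)$ entry, so the reduction is indeed linear in $N_{21} \pmod{\ell}$. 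This yields $[L_{\ell^m}L'_{\ell^m}:L_{\ell^m}] = \ell$, and $[L_{\ell^m}L'_{\ell^m}:L'_{\ell^m}] = \ell$ follows from the first paragraph.
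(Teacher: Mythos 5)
Your argument is correct, but it proceeds along a genuinely different route from the paper's. The paper's proof is arithmetic: since $G$ and $G'$ are full preimages of their mod-$\ell^M$ reductions, Chebotarev lets one prescribe $F$ modulo $\ell^{2m+1}$, and a careful choice of Frobenius matrix (with separate constructions for odd $\ell$ and for $\ell=2$) combines with Kohel's theorem on isogeny volcanoes to produce a prime $p$ at which the volcano containing $E$ has height exactly $m$ and a unique crater vertex occupied by $E$, forcing $E \to E'$ to be descending, so that $E(\F_p)[\ell^m]$ is full while $E'(\F_p)[\ell^m]$ is not; switching the roles of $E$ and $E'$ gives the other nontriviality, and the prior constraint that each degree lies in $\lbrace 1,\ell\rbrace$ finishes. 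You instead work purely inside $\GL_2(\Zl)$: Smith normal form for $\phi_*$ yields $\rho_{E',\ell} = P\rho_{E,\ell}P^{-1}$ with $P = \mathrm{diag}(\ell,1)$, the hypothesis $m \geq M$ converts the compositum degree into the index $\bigl[\Gamma(\ell^m):\Gamma(\ell^m)\cap P^{-1}\Gamma(\ell^m)P\bigr]$, and your homomorphism $I+\ell^m N \mapsto N_{21} \bmod \ell$ evaluates that index to exactly $\ell$; the second degree then follows from Greenberg's equality $[\GL_2(\Zl):G]=[\GL_2(\Zl):G']$ together with (\ref{ratio}). Your route is uniform in $\ell$ (no case split at $\ell=2$), dispenses with volcanoes and Kohel's theorem, and computes the degree directly rather than only showing it is $\neq 1$; what the paper's construction buys in exchange is explicit arithmetic content --- a positive density of primes at which the reduced isogeny is descending --- which fits the volcano framework used throughout this series of papers. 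The one step worth writing out in a final version is the passage from field degrees to group indices (that $\ker\rho_{E,\ell} = \ker\rho_{E',\ell}$ is contained in both relevant subgroups of $\Gal_\Q$, so the index may be computed after applying $\rho_{E,\ell}$, and that $\Gamma(\ell^m)$ is normal in $\GL_2(\Zl)$ so the congruence condition is independent of your choice of bases), but this is routine.
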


\begin{proof}
Since the images of the $\ell$-adic representations of $E$ and $E'$ have level at most $\ell^M$, $G$ and $G'$ are the full preimages of $G(\ell^M)$ and $G'(\ell^M)$ in $\GL_2(\Z_\ell)$, respectively.  Fix $m \geq M$. We show that $[L_{\ell^m}L_{\ell^m}':L'_{\ell^m}] \ne 1$ by proving that there exists a prime $p$ for which $E(\F_p)[\ell^m] \cong \Z/\ell^m \Z \times \Z/\ell^m \Z$ but $E'(\F_p)[\ell^m] \not \cong \Z/\ell^m \Z \times \Z/\ell^m \Z$.  We follow a strategy similar to the proof of \cite[Theorem 5.2.4]{ck}.  We prove that there exists a prime $p$ for which the height of the $\ell$-isogeny volcano containing $E$ is exactly $m$, there is a unique vertex on the crater of the volcano, and this vertex corresponds to $E$.  In this case, the isogeny $E \to E'$ must be descending.  Switching the roles of $E$ and $E'$ then shows that $[L_{\ell^m}L_{\ell^m}':L_{\ell^m}] \ne 1$.

We consider odd $\ell$ and $\ell = 2$ separately.  Suppose $\ell$ is an odd prime and let $\delta$ be a quadratic nonresidue modulo $\ell$.  Since $\ell^m$ is at least the $\ell$-adic level of $E$, there exists a prime $p$ such that $F \equiv \left( \begin{smallmatrix} 1 & \ell^{m} \\  \delta \ell^m & 1 \end{smallmatrix} \right) \pmod{\ell^{2m+1}}$.  Since $t \equiv 2\pmod{\ell^{2m+1}}$ and $p \equiv \det(F) \equiv 1-\delta \ell^{2m} \pmod{\ell^{2m+1}}$, we see that 
\[
t^2 - 4p \equiv 4 - 4(1-\delta \ell^{2m}) \equiv 4 \delta \ell^{2m} \pmod{\ell^{2m+1}}.
\]
This implies $v_\ell(t^2-4p) = 2m$.  

Let $\mathcal{O}_0$ be the endomorphism ring of an elliptic curve lying on the crater of the $\ell$-isogeny volcano corresponding to $E,\ D_0 = \disc(\mathcal{O}_0)$, and $\mathcal{O}_K$ be the maximal order of $\Q(\sqrt{t^2-4p})$.  We now apply a result of Kohel to determine several important properties of this volcano \cite[Thm.~7]{sutherland}. Since $\ell \nmid [\mathcal{O}_K \colon \mathcal{O}_0]$, the height of this volcano is exactly $m$.  Since 
\[
\legen{D_0}{\ell} = \legen{4\delta}{\ell} = \legen{\delta}{\ell} = -1,
\] 
there is a unique vertex on the crater of the volcano.  Since $F \equiv I \pmod{\ell^{m}}$, we see that the vertex corresponding to $E$ must be this unique vertex on the crater of the volcano.  This completes the proof in the case where $\ell$ is an odd prime.

Now suppose $\ell = 2$.  Since $2^m$ is at least the $2$-adic level of $E$, there exists a prime $p$ such that $F \equiv \left( \begin{smallmatrix} 1 +2^m & 2^{m} \\   2^m & 1 \end{smallmatrix} \right) \pmod{2^{2m+3}}$.  Since $t \equiv 2 + 2^m \pmod{\ell^{2m+3}}$ and $p \equiv \det(F) \equiv 1 + 2^m - 2^{2m} \pmod{2^{2m+3}}$, we see that 
\[
t^2 - 4p \equiv 4 + 4\cdot 2^m + 2^{2m} - 4(1 + 2^m+ 2^{2m}) \equiv 5 \cdot 2^{2m} \pmod{2^{2m+3}}.
\]
This implies $v_2(t^2-4p) = 2m$. Applying \cite[Thm.~7]{sutherland} as we did for odd $\ell$, we see that the height of the $2$-isogeny volcano corresponding to $E$ is $m$, there is a unique vertex on the crater of the volcano, and this vertex corresponds to $E$.
\end{proof}

\begin{rmk}
In contrast to what was just proved, If $E$ and $E'$ have CM then by \cite[Prop.~6.1]{rnt}, the isogeny $E \to E'$ is either always ascending, always descending, or always horizontal, for each ordinary prime of good reduction.  In Section \ref{CM}  we will show that this implies that either $d_{\ell^m} = 0$ for all $m \geq 2$, or $d_{\ell^m}' = 0$ for all $m\geq 2$.  We also point the interested reader to \cite{adelman} for more background on the Galois theory of torsion point fields for non-CM curves
\end{rmk}

Proposition \ref{dmpropintro}, restated here as Proposition \ref{dmpropintro_restated} for convenience,  now follows as an immediate corollary of Proposition \ref{geq5} and Lemmas \ref{xfield} and \ref{dmx}. 

\begin{prop} \label{dmpropintro_restated}
With notation as above, suppose the elliptic curves $E$ and $E'$ do not have CM and  that the maximum of the levels of their $\ell$-adic representations is $\ell^M$.  Then we have $d_{\ell^m}, d'_{\ell^m} \in \lbrace 0,1 - 1/\ell  \rbrace$ for all $m \geq 1$ and $d_{\ell^m} = d'_{\ell^m} = 1-1/\ell$ for all $m \geq M$.
\end{prop}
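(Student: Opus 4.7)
The plan is to assemble the proposition directly from the three previously established results: Lemma \ref{dmx}, Lemma \ref{xfield}, and Proposition \ref{geq5}. There is essentially no new work to do; the only subtlety is checking that the constraints on the composite field degrees cleanly translate into the claimed dichotomy for $d_{\ell^m}$ and $d'_{\ell^m}$.

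First I would unwind the definitions. By Lemma \ref{dmx}, $d_{\ell^m} = 1-X$ and $d'_{\ell^m} = 1-X'$, while Lemma \ref{xfield} identifies
\[
X = [L_{\ell^m}L'_{\ell^m}:L_{\ell^m}]^{-1}, \qquad X' = [L_{\ell^m}L'_{\ell^m}:L'_{\ell^m}]^{-1}.
\]
Thus the problem reduces entirely to pinning down the possible values of the two composite degrees.

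Next I would invoke the structural fact recorded in equation \eqref{ratio}, namely that each of $[L_{\ell^m}L'_{\ell^m}:L_{\ell^m}]$ and $[L_{\ell^m}L'_{\ell^m}:L'_{\ell^m}]$ lies in $\{1,\ell\}$. This is the key input that forces $X, X' \in \{1, 1/\ell\}$, which in turn yields $d_{\ell^m}, d'_{\ell^m} \in \{0, 1-1/\ell\}$ for every $m \geq 1$. This handles the first assertion of the proposition without needing any additional information beyond what is already in the excerpt.

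Finally, for $m \geq M$, I would appeal directly to Proposition \ref{geq5}, which asserts $[L_{\ell^m}L'_{\ell^m}:L_{\ell^m}] = [L_{\ell^m}L'_{\ell^m}:L'_{\ell^m}] = \ell$. Substituting back through Lemmas \ref{xfield} and \ref{dmx} gives $X = X' = 1/\ell$, hence $d_{\ell^m} = d'_{\ell^m} = 1 - 1/\ell$, as desired. The only part of the entire argument where any genuine mathematical content is used is Proposition \ref{geq5}, whose proof (the isogeny-volcano construction of a prime exhibiting descending isogeny at level $\ell^m$) is the real engine; the proposition under consideration is a bookkeeping corollary that rephrases that statement in the language of the conditional probabilities $d_{\ell^m}, d'_{\ell^m}$.
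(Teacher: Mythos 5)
Your proposal is correct and matches the paper's own argument, which likewise presents the proposition as an immediate corollary of Lemmas \ref{xfield} and \ref{dmx} together with Proposition \ref{geq5} (and the fact recorded at \eqref{ratio} that the composite degrees lie in $\{1,\ell\}$). No gaps; the bookkeeping is exactly as the authors intended.
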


We now prove the main result of the paper.

\begin{thm} \label{main_thm_restated}
Let $E$ and $E'$ be $\ell$-isogenous elliptic curves over $\Q$.  Then the proportion $P(E,E')$ of primes such that $E(\F_p) \simeq E'(\F_p)$ is given by the formula
\begin{align} \label{main_form2}
P(E,E') = 1 - \sum_{m = 1}^\infty \left( \frac{d_{\ell^m}}{|G(\ell^m)|}  +  \frac{d'_{\ell^m}}{|G'(\ell^m)|} \right).
\end{align}
\end{thm}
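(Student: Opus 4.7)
The plan is to realize $1 - P(E, E')$ as the density of $\mathcal{S} = \{p : E(\F_p) \not\simeq E'(\F_p)\}$, partition $\mathcal{S}$ into a countable disjoint union whose pieces have densities computable by Chebotarev, and then sum. Because the prime-to-$\ell$ parts of $E(\F_p)$ and $E'(\F_p)$ agree by \cite[Cor.~1]{cullinan1} and the two curves are isogenous over $\F_p$, isomorphism is detected entirely by the $\ell$-primary part. Setting $a_p = \max\{m : F \equiv I \pmod{\ell^m}\}$ (equivalently, the $\ell$-adic valuation of the smaller invariant factor of $E(\F_p)$), and defining $a_p'$ analogously in terms of $F'$, we get $E(\F_p) \simeq E'(\F_p)$ if and only if $a_p = a_p'$.

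The volcano dichotomy recalled at the start of Section~\ref{main_section} is the key structural input: it forces $|a_p - a_p'| \leq 1$, with equality exactly when $E(\F_p) \not\simeq E'(\F_p)$. Put
\[
A_m = \{p : F \equiv I \pmod{\ell^m},\ F' \not\equiv I \pmod{\ell^m}\},
\]
and let $B_m$ be the analogous set with $F$ and $F'$ swapped. If $p \in A_m$, then $a_p \geq m$ and $a_p' < m$, so $a_p - a_p' \geq 1$; combined with $|a_p - a_p'| \leq 1$, this pins $a_p = m$ and $a_p' = m - 1$. Hence the sets $A_1, A_2, \ldots$ are pairwise disjoint; by symmetry so are the $B_m$; and no $A_m$ can meet any $B_k$ (they assert opposite signs of $a_p - a_p'$). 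Therefore $\mathcal{S} = \bigsqcup_{m \ge 1} A_m \sqcup \bigsqcup_{m \ge 1} B_m$.

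By Chebotarev, $\Prop(F \equiv I \pmod{\ell^m}) = 1/|G(\ell^m)|$; combined with the definition of $d_{\ell^m}$ this yields $\Prop(A_m) = d_{\ell^m}/|G(\ell^m)|$, and symmetrically $\Prop(B_m) = d'_{\ell^m}/|G'(\ell^m)|$. Summing over $m$ produces the right-hand side of \eqref{main_form2}. What remains is to interchange the defining limit $X \to \infty$ in $P(E,E')[X]$ with the infinite sum over $m$; I plan to handle this by truncating at a cutoff $M_0$, applying Chebotarev exactly to the finite union $\bigcup_{m \le M_0}(A_m \cup B_m)$, and bounding the tail by $\sum_{m > M_0}(|G(\ell^m)|^{-1} + |G'(\ell^m)|^{-1})$, which decays geometrically once $m \geq M$ because $G$ is then the full preimage of $G(\ell^M)$ in $\GL_2(\Z_\ell)$ and $|\ker(\GL_2(\Z/\ell^{m+1}\Z) \to \GL_2(\Z/\ell^m\Z))| = \ell^4$.

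The main obstacle is this last limit-exchange step: Chebotarev a priori only supplies asymptotic densities for each individual $A_m$, so promoting this into a statement about $P(E,E')[X]$ requires a uniform tail bound. The same technicality is dealt with in \cite[\S 3]{ck} and in \cite{rnt}, so I would defer to their treatment rather than reprove it. With the interchange justified, Theorem~\ref{main_thm_restated} follows immediately from the disjoint decomposition in the second paragraph.
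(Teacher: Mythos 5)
Your proposal is correct and follows essentially the same route as the paper: the paper's (very terse) proof likewise decomposes the set of primes with $E(\F_p)\not\simeq E'(\F_p)$ according to the level $\ell^m$ at which exactly one of the two curves has full $\ell^m$-torsion, using the volcano dichotomy stated at the start of Section~\ref{main_section} and reading off the density of each piece as $d_{\ell^m}/|G(\ell^m)|$ or $d'_{\ell^m}/|G'(\ell^m)|$ via Chebotarev. Your write-up is in fact more explicit than the paper's on two points it leaves implicit --- the pairwise disjointness of the pieces (via $|a_p-a_p'|\leq 1$) and the interchange of the limit in $X$ with the sum over $m$ --- both of which you resolve correctly.
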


\begin{proof}
The summand $d_{\ell^m}/|G(\ell^m)| + d'_{\ell^m}/|G'(\ell^m)|$ represents the proportion of primes for which one of $E(\F_p)$ or $E'(\F_p)$ has full $\ell^m$-torsion, but the other does not.  Summing over all $m$ gives the proportion of primes for which $E(\F_p)$ and $E'(\F_p)$ are nonisomorphic, whence the formula (\ref{main_form}).
\end{proof}

Theorem \ref{main_thm_restated} holds regardless of whether $E$ or $E'$ has a rational point of order $\ell$.  If neither $E$ nor $E'$ has a rational point of order $\ell$, then at all primes for which neither $E(\F_p)$ nor $E'(\F_p)$ has a point of order $\ell$, we automatically have that $E(\F_p) \simeq E'(\F_p)$.  We therefore expect to see high values of $P(E,E')$ in such cases.  We conclude this section with an example illustrating this as well as a remark on quadratic twists.

\begin{exm}
Let $E$ be the elliptic curve \href{https://beta.lmfdb.org/EllipticCurve/Q/121/a/1}{121.a1} and $E'$ the elliptic curve \href{https://beta.lmfdb.org/EllipticCurve/Q/121/a/2}{121.a2}; they are 11-isogenous over $\Q$.  Each curve has level 11 and $d_{11^m} = d'_{11^m} = 10/11$ for all $m \geq 1$.  We also have $|G(11)| = |G'(11)| = 120$.  Applying Theorem \ref{main_thm_restated}, we have $P(E,E') = 86509/87840 \approx 0.9848$, while $P(E,E')[10^7] \approx 0.9839$.  

Checking the LMFDB, one sees that $E$ and $E'$ acquire an 11-torsion point over $\Q(\zeta_{11})^+$ and $\Q(\zeta_{11})$, respectively.  Note that $[\Q(\zeta_{11}):\Q] = 10$ and $[\Q(\zeta_{11})^+:\Q] = 5$.  We confirm in Pari/GP that $E(\F_p)$ and $E'(\F_p)$ have a point of order 11 for roughly 1/5 of the primes up to $10^7$ and thus  for the complementary 4/5 of the primes we automatically have $E(\F_p) \simeq E'(\F_p)$.
\end{exm}

\begin{rmk}
If $E$ admits a rational $\ell$-isogeny, then so does every quadratic twist $E_d$.  However, it is not the case that $P(E,E') = P(E_d,E_d')$, since twisting can change rational torsion.  For example, if $E$ is the elliptic curve \href{https://beta.lmfdb.org/EllipticCurve/Q/175/b/3}{175.b3} and $E'$ the curve \href{https://beta.lmfdb.org/EllipticCurve/Q/175/b/2}{175.b2}, then $E$ and $E'$ are 3-isogenous, yet neither has a rational 3-torsion point.  Using the information on the $3$-adic Galois images from their LMFDB pages, we compute $P(E,E') = 199/240$ via Theorem \ref{main_thm_restated}.

Twisting by $5$, we check that $E_5$ is the curve \href{https://beta.lmfdb.org/EllipticCurve/Q/35/a/3}{35.a3} and $E_5'$ is \href{https://beta.lmfdb.org/EllipticCurve/Q/35/a/2}{35.a2}, each of which has a rational 3-torsion point.  Using their LMFDB data we compute $P(E_5,E_5') = 79/120$.  We also note that $199/240 > 79/120$, reflecting the fact that neither $E$ nor $E'$ has a rational point of order 3 and so it is more likely that $E(\F_p) \simeq E'(\F_p)$ than $E_5(\F_p) \simeq E'_5(\F_p)$.
\end{rmk}

\section{Examples}

By our discussion following the proof of Theorem \ref{main_thm_restated}, we are interested in values of $P(E,E')$ that are not artificially ``inflated'' by neither $E$ nor $E'$ having a point of order $\ell$.  Therefore, suppose $E(\Q)_\tors \simeq \Z/\ell\Z$ and write $E' = E/\langle P \rangle$, where $P$ is a rational point of order $\ell$.  If $\ell =2$ then $E'$ always has a point of order $2$, while if $\ell  \in \lbrace 3,5 \rbrace$, then $E'$ has a rational point of order $\ell$ only when the image of the mod $\ell$ representation of $\ell$ lies in a split Cartan subgroup of $\GL_2(\Z/\ell\Z)$.  If $\ell=7$, then $E'$ will never have a point of order $\ell$ by the main result of \cite{greenberg7}.

Given this setup, the maximal order of $G(\ell)$ and $G'(\ell)$ is $\ell(\ell-1)$.   If $G$ and $G'$ have level $\ell$, then we have the following formula for $|G(\ell^m)|$ and $|G'(\ell^m)|$:
\[
|G(\ell^m)| = |G'(\ell^m)| = (\ell-1)\ell^{4m-3}.
\]
We also have $[L_{\ell^m}L'_{\ell^m}:L_{\ell^m}] = [L_{\ell^m}L'_{\ell^m}:L'_{\ell^m}] = \ell$ so that $d_{\ell^m} = d'_{\ell^m} = 1-1/\ell$.  Putting all of this together, when $G$ and $G'$ are as large as possible, we have
\begin{align*}
P(E,E') &=  1 - \sum_{m = 1}^\infty \left(\frac{d_{\ell^m}}{|G(\ell^m)|}  +  \frac{d'_{\ell^m}}{|G'(\ell^m)|}\right) \\
&= 1 - \frac{\ell-1}{\ell} \cdot \frac{1}{\ell(\ell-1)} \sum_{m = 0}^\infty \frac{1}{\ell^{4m}} \\
&= \frac{\ell^4 - 2\ell^2 -1}{\ell^4-1}.
\end{align*}
Define $f(\ell) \colonequals \frac{\ell^4 - 2\ell^2 -1}{\ell^4-1}$.  Then we compute
\begin{center}
\begin{tabular}{|ll|}
\hline
$\ell$ & $f(\ell)$ \\
\hline
2 & 7/15 = 0.4$\overline{6}$ \\
3 & 31/40 = 0.775\\
5 & 287/312 = 0.919$\overline{871794}$\\
7 & 1151/1200 = 0.9591$\overline{6}$\\
\hline
\end{tabular}
\end{center}

We now give some examples illustrating Equation (\ref{main_form}) when $E$ has a rational $\ell$-torsion point and $G$ is as large as possible, as well as a special case when $G$ is not.  In all of our examples we calculate out to $10^6$ and note that $\pi(10^6) = 78498$.

\begin{exm} For each of the following examples, $|G(\ell)| = |G'(\ell)| = \ell(\ell-1)$.  

\begin{enumerate}
\item Let $E$ and $E'$ be the 2-isogenous elliptic curves of the isogeny class with LMFDB label \href{https://beta.lmfdb.org/EllipticCurve/Q/69/a/}{69.a}.  We compute
\[
P(E,E')[10^6] = \frac{36631}{\pi(10^6)} \approx  0.46665 \approx 0.4\overline{6}  = f(2).
\]

\item Let $E$ and $E'$ be the 3-isogenous elliptic curves of the isogeny class with LMFDB label \href{https://beta.lmfdb.org/EllipticCurve/Q/44/a/}{44.a}.  
\[
P(E,E')[10^6] = \frac{72283}{\pi(10^6)} \approx  0.77548 \approx 0.775  = f(3).
\]

\item Let $E$ and $E'$ be the 5-isogenous elliptic curves of the isogeny class with LMFDB label \href{https://beta.lmfdb.org/EllipticCurve/Q/38/b/}{38.b}.  We compute
\[
P(E,E')[10^6] = \frac{60874}{\pi(10^6)} \approx  0.92082 \approx 0.919\overline{871794}  = f(5).
\]

\item Let $E$ and $E'$ be the 7-isogenous elliptic curves of the isogeny class with LMFDB label \href{https://beta.lmfdb.org/EllipticCurve/Q/26/b/}{26.b}.  We compute 
\[
P(E,E')[10^6] = \frac{75298}{\pi(10^6)}  \approx  0.95923 \approx 0.9591\overline{6}  = f(7).
\]
\end{enumerate}
\end{exm} 

\begin{exm}
Let $E$ be the elliptic curve with LMFDB label \href{https://beta.lmfdb.org/EllipticCurve/Q/144/b/4}{144.b4} and $E'$ be \href{https://beta.lmfdb.org/EllipticCurve/Q/144/b/3}{144.b3}.  Then $E(\Q)_{\tors} \simeq E'(\Q)_{\tors} \simeq \Z/2\Z \times \Z/2\Z$.  Therefore $d_{2} = d_{2}' = 0$.   According to their LMFDB pages, both elliptic curves have level 8, with $|G(4)| = 4$, $|G(8)| = 16$, $|G'(4)| = 8$, $|G'(8)| = 16$, and $L_4 = \Q(\zeta_{12})$.  Direct calculation with Pari/GP shows that $L'_4 = \Q(\zeta_{24})$ and thus
\begin{align*}
d_4 &=1 -  X = 1 - [L_4L_4':L_4]^{-1} = 1/2,\text{ and} \\
d_4' &=1 -  X' = 1 - [L_4L_4':L_4']^{-1} = 0.
\end{align*}
Further calculation reveals that $d_{2^m} = d_{2^m}' = 1/2$ for $m=3,4$.  By Theorem \ref{main_thm_restated} we have
\begin{align*}
P(E,E') &=1 -   \sum_{m = 1}^\infty \left(\frac{d_{2^m}}{|G(2^m)|}  +  \frac{d'_{2^m}}{|G'(2^m)|} \right) \\
&= 1 - \left(\frac{1/2}{|G(4)|} + \frac{1/2}{|G(8)|} \cdot \sum_{m=0}^\infty \frac{1}{16^m} + \frac{1/2}{|G'(8)|} \cdot \sum_{m=0}^\infty \frac{1}{16^m} \right)\\
&= 1-  1/8 - \frac{1}{32} \cdot \frac{16}{15} - \frac{1}{32} \cdot \frac{16}{15} = 97/120 = 0.808\overline{3}.
\end{align*} 
Finally, we compute
\[
P(E,E')[10^6] = \frac{63469}{\pi(10^6)}  \approx  0.8085.
\]
\end{exm}

\section{Remarks on CM Elliptic Curves} \label{CM}

By \cite[Thm.~1]{lenstra}, if $E$ is an ordinary elliptic curve over a finite field $\Fq$ with endomorphism ring $\mathcal{O}$, then $E(\Fq) \simeq \mathcal{O}/(\pi-1)$, where $\pi$ represents the Frobenius as an element of $\mathcal{O}$.    If $E$ and $E'$ are isogenous elliptic curves with endomorphism rings $\mathcal{O}$ and $\mathcal{O}'$ over $\Fq$, respectively, then the prime-to-$[\mathcal{O}:\mathcal{O'}]$ parts of the finite groups $E(\Fq)$ and $E'(\Fq)$ are isomorphic by \cite[Lem.~2]{cullinan1}.   We will study the effect of all of this in the case where $E$ and $E'$ define elliptic curves over $\Q$ with CM.  Before beginning, however, we recall that by \cite[Thm.~2.3.1]{ck} if $E$ and $E'$ have supersingular reduction then $E(\F_p) \simeq E'(\F_p)$.  Therefore, we can focus exclusively on primes of ordinary reduction.

 If $E$ and $E'$ have CM over $\Q$ then, by the Baker-Heegner-Stark Theorem, the CM orders correspond to one of the nine fundamental discriminants 
\[
-3,-4,-7,-8,-11,-19,-43,-67,-163,
\]
or one of the four non-fundamental discriminants, $-12,-16,-27,-28 $, of class number 1.  Suppose that $E$ and $E'$ have geometric endomorphism rings $\mathcal{O}$ and $\mathcal{O}'$ over $\Q$, respectively.  If $p > 3$ is a prime of ordinary reduction, then by \cite[Ch. 13, Thm. 12]{lang} $E \pmod{p}$ and $E' \pmod{p}$ have geometric endomorphism rings $\mathcal{O}$ and $\mathcal{O}'$ as well.  Because ordinary elliptic curves have all endomorphisms defined over the base field, we have that $\mathcal{O}$ and $\mathcal{O}'$ are the endomorphism rings over $\F_p$ of $E \pmod{p}$ and $E' \pmod{p}$, respectively. 

Since $E$ and $E'$ are isogenous, $\mathcal{O}$ and $\mathcal{O}'$ are homothetic and, as a result, $\disc \mathcal{O} / \disc \mathcal{O}'$ is a rational square.  Comparing against the possible discriminants above, those square values are either $1, 4, 9$, or their reciprocals.  If $\disc \mathcal{O} = \disc \mathcal{O}'$, then $\mathcal{O} \simeq \mathcal{O}'$.  This isomorphism holds modulo $p$ as well and thus $E(\F_p) \simeq E'(\F_p)$. 

\begin{lem}
Suppose $E$ and $E'$ are $\ell$-isogenous CM elliptic curves over $\Q$ with $\ell >3$.  Then $P(E,E') =1$.
\end{lem}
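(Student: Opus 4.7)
The strategy is to prove that $E(\F_p) \simeq E'(\F_p)$ for every prime $p > 3$ of good reduction, which immediately gives $P(E,E') = 1$. The supersingular primes in this set give isomorphic groups automatically by \cite[Thm.~2.3.1]{ck}, so I need only handle the (density $1/2$) set of primes of good ordinary reduction.

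For such a prime $p > 3$, the paragraph preceding the lemma already establishes that $E \pmod p$ and $E' \pmod p$ have endomorphism rings equal to the global CM orders $\mathcal{O}$ and $\mathcal{O}'$, and that the ratio $\disc \mathcal{O}/\disc \mathcal{O}'$ is a rational square lying in $\{1,4,9,1/4,1/9\}$. Since the reduction of the $\Q$-rational $\ell$-isogeny $E \to E'$ makes $E \pmod p$ and $E' \pmod p$ into $\ell$-isogenous elliptic curves over $\F_p$, the theory of isogeny volcanoes \cite{sutherland} forces the same ratio to lie in $\{1,\ell^{2},\ell^{-2}\}$, with horizontal isogenies giving the value $1$ and ascending or descending isogenies changing the conductor by a factor of $\ell$. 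The hypothesis $\ell > 3$ eliminates every nontrivial option, so $\disc \mathcal{O} = \disc \mathcal{O}'$ and hence $\mathcal{O} \simeq \mathcal{O}'$.

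Having identified the two endomorphism rings, I invoke Lenstra's theorem \cite[Thm.~1]{lenstra} to write $E(\F_p) \simeq \mathcal{O}/(\pi - 1)$ and $E'(\F_p) \simeq \mathcal{O}/(\pi' - 1)$, where $\pi,\pi' \in \mathcal{O}$ denote the Frobenius endomorphisms of $E \pmod p$ and $E' \pmod p$, respectively. Because isogenous elliptic curves share the same $L$-function, $\pi$ and $\pi'$ have equal trace and both have norm $p$, so $\pi' \in \{\pi,\bar\pi\}$. Complex conjugation is a ring automorphism of $\mathcal{O}$ that carries $(\pi - 1)$ to $(\bar\pi - 1)$, yielding the desired isomorphism $\mathcal{O}/(\pi - 1) \simeq \mathcal{O}/(\pi' - 1)$ of abelian groups.

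The hard part is the discriminant-ratio step in the second paragraph: one must confirm that among the thirteen CM discriminants over $\Q$, no pair is related by a factor of $\ell^{2}$ for any prime $\ell > 3$. This reduces to the finite check that the only square ratios arising are $4$ and $9$, coming from the pairs $(-3,-12),(-4,-16),(-7,-28)$ and $(-3,-27)$, and this is precisely what the hypothesis $\ell > 3$ is designed to rule out. The remaining identification of $\pi$ with $\pi'$ (up to conjugation) is clean once one observes that any two elements of $\mathcal{O}$ with a common trace and norm are either equal or complex conjugates.
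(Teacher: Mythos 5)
Your proposal is correct and follows essentially the same route as the paper: rule out supersingular primes via \cite[Thm.~2.3.1]{ck}, use the finite list of CM discriminants of class number one to show the index $[\mathcal{O}:\mathcal{O}']$ can only be $1$, $2^{\pm 1}$, or $3^{\pm 1}$, conclude from $\ell>3$ that $\mathcal{O}\simeq\mathcal{O}'$, and deduce $E(\F_p)\simeq E'(\F_p)$. The only cosmetic difference is that you spell out the final step via Lenstra's theorem and conjugacy of Frobenius, whereas the paper cites \cite[Prop.~21]{kohel} for the index constraint and defers the endomorphism-ring argument to the paragraph preceding the lemma.
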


\begin{proof}
If $E$ and $E'$ are $\ell$-isogenous, then by \cite[Prop.~21]{kohel} the endomorphism rings, under reduction modulo ordinary $p$, satisfy $[\mathcal{O}:\mathcal{O}'] = 1,\ell$, or $\ell^{-1}$.  However, since $E$ and $E'$ have CM over $\Q$, by our work above it follows that $[\mathcal{O}: \mathcal{O}'] \in \lbrace 1,2^{\pm 1},3^{\pm 1} \rbrace$.  Therefore, we must have $\mathcal{O} \simeq \mathcal{O}'$ and hence $E(\F_p) \simeq E'(\F_p)$.  Since $E(\F_p) \simeq E'(\F_p)$ for all supersingular primes, it  follows that $P(E,E') =1$.
\end{proof}

We are now reduced to studying the cases $[\mathcal{O}:\mathcal{O}'] \in \lbrace 2^{\pm 1}, 3^{\pm 1} \rbrace$.  Suppose $\ell = 2$.  In \cite{rnt} we proved that in the case of CM elliptic curves, the proportion of anomalous primes of a pair of 2-isogenous curves is either 1/12 or 0.  This essentially came down to the fact (proved in  \cite[Prop.~6.1]{rnt}) that if $E \to E'$ is a 2-isogeny then either 
\begin{enumerate}
\item $L_{2^m} \subseteq L'_{2^m}$  for all $m \geq 2$, or 
\item $L_{2^m}' \subseteq L_{2^m}$  for all $m \geq 2$.
\end{enumerate}
Thus either $d_{2^m} = 0$ for all $m\geq 2$ or $d'_{2^m}=0$ for all $m \geq 2$.   It also follows from \cite[Prop.~6.1]{rnt} that if $d_{4} \ne 0$ (respectively $d_{4}' \ne 0)$, then $d_{2^m} = 1/2$ (resp.~$d_{2^m}' = 1/2$) for all $m\geq 2$.  The following example demonstrates how to use this property of CM elliptic curves to compute $P(E,E')$. 

\begin{exm}
Let $E$ and $E'$ be the 2-isogenous elliptic curves  \href{https://www.lmfdb.org/EllipticCurve/Q/49/a/1}{49.a1} and \href{https://www.lmfdb.org/EllipticCurve/Q/49/a/2}{49.a2}, respectively. These elliptic curves have CM, and one verifies that the geometric endomorphism ring of $E$ is $\Z[\sqrt{-7}]$, while that of $E'$ is $\Z[(-1 + \sqrt{-7})/2]$.  

According to their LMFDB pages, $L_2 = \Q(\sqrt{7})$ and $L_2' = \Q(\sqrt{-7})$.  Hence $d_2 = d_2' = 1/2$.  By \cite[Prop.~6.1]{rnt}, either $d_{2^m}=0$ for all $m\geq 2$ or $d'_{2^m} =0$ for all $m \geq 2$.  We check that $[L'_{4}:L_4] = 2$, hence $d'_{2^m} = 0$ for all $m\geq 2$ and $d_{2^m} = 1/2$ for all $m\geq 2$.   One checks that $|G(2^m)| = |G'(2^m)| = 2 \cdot 4^{m-1}$ for all $m\geq 2$.  Thus,
\begin{align*}
P(E,E') &= 1 - \sum_{m=1}^{\infty} \left( \frac{d_{2^m}}{|G(2^m)|} + \frac{d_{2^m}'}{|G'(2^m)|} \right) \\
&= 1 - \left( \frac{1}{2} \sum_{m=1}^\infty \left(\frac{1}{2\cdot 4^{m-1}}\right) + \frac{1}{2} \cdot \frac{1}{2} \right) \\
&= 1 - \left( \frac{1}{4} \cdot \frac{4}{3} + \frac{1}{4}\right) = \frac{5}{12} =0.41\overline{6}.
\end{align*}
We separately check that $P(E,E')[10^6] \approx 0.4165$.
\end{exm}

Similar to the non-CM case, one could work out all possible values of $P(E,E')$ for all  2-isogenous CM elliptic curves over $\Q$.  To finish the paper, we consider the case $\ell=3$.  By the classification of isogeny-torsion graphs over $\Q$ in \cite{chil} and the classification of $\ell$-adic Galois representations of CM elliptic curves over $\Q$, the isogeny-torsion graph 
\[
\xymatrix{
[3] \ar@{-}[r] & [3] \ar@{-}[r] & [3] \ar@{-}[r]  &[0]
}
\]
is the unique (up to isomorphism) isogeny-torsion graph over $\Q$ to which elliptic curves having CM discriminant $-27$ can belong.   All such isogeny-torsion graphs contain elliptic curves that are quadratic twists of those in the isogeny-torsion graph \href{https://beta.lmfdb.org/EllipticCurve/Q/27/a/}{27.a}.  Moreover, by \cite[Thm.~1.2, 1.4]{alvaro}, there are only a handful of cases where $E$ and $E'$ are 3-isogenous, have CM, and $\disc \mathcal{O}/\disc \mathcal{O}' \in \lbrace 3^{\pm 2} \rbrace$.  We list those cases in Table \ref{table1} according to their 3-adic images $G$ and $G'$, discriminants, $j$-invariants, and rational torsion subgroups.

\begin{center}
\begin{table}
\begin{tabular}{|l|l|c|}
\hline
$[G, \disc(\mathcal{O}), j,E(\Q)_\tors]$ & $[G', \disc(\mathcal{O}'), j',E'(\Q)_\tors]$ & Example\\
\hline
$[27.648.13.25,-27,-12288000, \Z/3\Z]$ & $[27.1944.55.31, -3, 0, \Z/3\Z]$ & \href{https://beta.lmfdb.org/EllipticCurve/Q/27/a/}{27.a} \\
\hline
$[27.648.13.34,-27,-12288000, 0]$ & $[27.1944.55.37, -3, 0, \Z/3\Z]$ & \href{https://beta.lmfdb.org/EllipticCurve/Q/27/a/}{27.a} \\
\hline
$[\text{maximal}, -27,$-12288000$,0]$ & $[27.972.55.16,-3,0,0]$&\href{https://beta.lmfdb.org/EllipticCurve/Q/432/e/}{432.e} \\
\hline
\end{tabular} \caption{Elliptic Curves with CM discriminant $-27$} \label{table1}
\end{table}
\end{center}

We conclude by working out $P(E,E')$ for an example of the third case of Table~\ref{table1}.  

\begin{exm}
Suppose $E$ is \href{https://beta.lmfdb.org/EllipticCurve/Q/432/e/1}{432.e1} and $E'$ is  \href{https://beta.lmfdb.org/EllipticCurve/Q/432/e/4}{432.e4}.  From their LMFDB pages, one checks that $L_3 = \Q(\zeta_{12})$, which is an index-3 subfield of $L_3'$.  Therefore $d_3 = 2/3$ and $d_3'=0$.  By direct calculation, $d_9 = 2/9$ and $d_9'=0$.  The proof of \cite[Prop. 6.1]{rnt} carries over immediately to the case $\ell=3$ as well (since the endomorphism rings of $E \pmod{p}$ and $E' \pmod{p}$ are fixed for all ordinary $p$, the rational 3-isogeny $E \to E'$ always reduces to an ascending 3-isogeny), hence $d_{3^m}' = 0$ for all $m$ and $d_{3^m} = 2/3$ for all $m$.  We have $|G(3^m)| = 4\cdot 9^{m-1}$ for all $m \geq 1$.  Now we compute:
\[
P(E,E') = 1 - \sum_{m = 1}^\infty \left( \frac{d_{3^m}}{|G(3^m)|} +  \frac{d'_{3^m}}{|G'(3^m)|} \right) = 1-\frac{2}{3} \cdot \frac{1}{4} \cdot \frac{10}{9}  = \frac{22}{27} = 0.\overline{814}.
\]
For comparison, a routine calculation shows that $P(E,E')[10^7] \approx 0.8126$.
\end{exm}

\section*{Acknowledgments}

The second author was supported by DMS 2154223.

\end{document}